\newtheorem{theorem}{Theorem}[section]
\newtheorem{lemma}[theorem]{Lemma}
\newtheorem{proposition}[theorem]{Proposition}
\newtheorem{corollary}[theorem]{Corollary}
\theoremstyle{definition}
\theoremstyle{remark}
\newtheorem{remark}[theorem]{Remark}
\numberwithin{equation}{section}
\begin{document}

\title
[The Hawking mass for CMC surfaces ]
{On the Hawking mass for CMC surfaces in positive curved 3-manifolds}



\author[Luiz Melo]{Luiz Ricardo Abreu Melo}

\address{Instituto de Matem\'atica - Universidade Federal de Alagoas, Macei\'o - Brazil}
\email{luiz.melo@im.ufal.br}

\subjclass[2010]{53C21, 53C24, 53C80, 58C40.}

\date{\today}


\commby{}

\begin{abstract} 
In this paper, we present two rigidity results for stable constant mean curvature (CMC) surfaces immersed in $3$-manifolds with positive scalar curvature, assuming that the Hawking mass is zero. In the first result, we assume the surface to be approximately round, while in the second result, we consider surfaces invariant under an even symmetry.
\end{abstract}

\maketitle


\section{Introduction}
In the context of general relativity, the concept of mass arises from the need to establish a connection between general relativity and classical Newtonian gravity. One of the earliest attempts in this direction is the ADM mass 
for asymptotically flat manifolds. In the time-symmetric case, it relates the local density of matter (equivalent to energy) on a Riemannian manifold $(M, g)$ to the scalar curvature, denoted by $S_g$. One of the most significant applications of the ADM mass is the Positive Mass Theorem (PMT), as proven by Schoen and Yau \cite{sy79} and Witten \cite{Witten}. This theorem states that the ADM mass of a complete asymptotically flat three-manifold with nonnegative scalar curvature is always nonnegative and equals zero only if $M$ is isometric to flat Euclidean space.

Another important result in the context of general relativity is the Penrose inequality, which states that if $(M,g)$ is a complete asymptotically flat three-manifold with non-negative scalar curvature and boundary $\Sigma = \partial M \ne 0$, then the ADM mass $M$ is greater than or equal to the Hawking mass of $\Sigma$, that is, $\sqrt{|\Sigma|/16\pi}$, where $\Sigma$ is an outermost minimal surface, with equality if, and only if, $M$ is isometric to half of the Schwarzschild metric $\mathbb{R}^3 \setminus \{0\}.$

It is natural to consider $\sqrt{|\Sigma|/16\pi}$ as the quasi-local mass of a black hole (minimal surface) $\Sigma$. In a more general sense, in correspondence with Newtonian gravity, it is expected that any bounded region $(\Omega, g)$ possesses a quasi-local mass that takes into account both the density of matter (measured  in this case by the scalar curvature $S_g \geq 0$) and some influence from the gravitational field. The positivity of the total mass established by the PMT motivates certain properties that we would expect from a mass definition. Among the proposed candidates for quasi-local mass, one of the most significant is the Hawking mass \cite{hawking}.

In \cite{bart2003}, Bartnik proposed the problem of investigating the rigidity of the Hawking mass.
That is, what can be inferred about the ambient manifold when the Hawking mass vanishes for a certain surface, and what is the role of 2-spheres (black holes) in this context. 
In general, the t Hawking mass  of  a embedded surface $\Sigma\subset M^3$  is defined as 
\begin{equation}\label{hawkingmass}
m_\text{\rm H}(\Sigma)
=\bigg(\dfrac{|\Sigma|}{16\pi}\bigg)^{1/2}
\bigg( 1-\dfrac{1}{16\pi}\int_\Sigma \mathrm{H}^2\,da-\dfrac{\Lambda}{24\pi}\,|\Sigma|\bigg),
\end{equation} 
where $\Lambda=\inf_M S_g$, $|\Sigma|$ denotes the area of $\Sigma$, and $H$ is the mean curvature function of $\Sigma$ (see \cite{max}).
In particular, assuming $\inf_M S_g = 6$ we get
 \begin{eqnarray}\label{hm2}
     m_\text{\rm H}(\Sigma)
=\dfrac{|\Sigma|^{1/2}}{(16\pi)^{3/2}} \left( 16\pi-\int_\Sigma (\mathrm{H}^2+4)\,da \right).
 \end{eqnarray}

In \cite{sun}, Sun solved this  rigidity problem for nearly round surfaces. By studying the eigenvalues of the Jacobi operator for stable CMC surfaces with zero Hawking mass, he transferred the rigidity problem to the trivial solution of the mean field equation. 
In fact, Sun proved that if the solution is sufficiently close to the trivial solution, which is zero, then this solution is unique, and the surface must be isometric to $\mathbb{S}^2 \subset \mathbb{R}^3$, and then he was able to invoke some geometric rigidity theorems to conclude  that these regions are respectively balls in $\mathbb{R}^3$ and $\mathbb{H}^3$, under curvature assumptions of $S_g\geq 0$ and $S_g \geq -6$. This raises the question of whether for $S_g \geq 6$, we would have a similar result for $\mathbb{S}^3$.

Later, in \cite{shi2019}, Shi et al. studying the solutions to the mean field equation established a new rigidity result for stable CMC spheres  with even symmetry, still assuming either
$S_g\geq 0$ or $S_g\geq -6$.

In this paper, we address the rigidity problem concerning stable constant mean curvature (CMC) spheres within the context of positive curvature. To establish the rigidity of the ambient manifold, it is necessary for us to assume positive Ricci curvature, allowing us to apply the Hang-Wang theorem as outlined in \cite{hangwang}.
We recall that a Riemannian surface $\Sigma$ is \emph{approximately a round sphere} if the Gaussian curvature $K_\Sigma$ is in $C^0$ and 
    \begin{eqnarray*}
     \left|\dfrac{|\Sigma|}{4\pi }K_\Sigma - 1\right|_{C^0}< \epsilon_0   
    \end{eqnarray*}
for some universal constant $\epsilon_0 \ll 1$, where $|\Sigma|$ denotes the area of $\Sigma$.

Using this notation, and denoting by $\mathbb S^2(r)$ the round sphere of radius $r$  in $\mathbb R^3$, we have the following extension of Sun's theorem:

\begin{theorem}\label{teo1}
    Let $(M^3, g)$ be a complete Riemannian manifold with $S_g\geq 6$, and  $\Sigma$ be a stable sphere with constant mean curvature  in $M^3$. If $m_H(\Sigma) = 0$ and $\Sigma$ is approximately a round sphere, then  $\Sigma$ is isometric to $\mathbb{S}^2(\sqrt{|\Sigma|}/{4\pi})$. Moreover, if we assume $\textrm{Ric}_g \geq 2$, then the mean convex region $\Omega\subset M$ whose boundary is $\Sigma$ is isometric to a geodesic ball in $\mathbb{S}^3_+$ whose boundary is $ \mathbb{S}^2(\sqrt{|\Sigma|}/{4\pi})$.
\end{theorem}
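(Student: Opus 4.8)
The plan is to follow Sun's scheme \cite{sun}: use the vanishing of the Hawking mass together with the volume‑preserving stability inequality to force $\Sigma$ to be totally umbilic, then upgrade ``approximately round'' to exactly round through a uniqueness statement for a mean field equation, and finally invoke the Hang--Wang rigidity theorem for the ambient conclusion. I begin by recording the elementary consequences of the hypotheses. Since $\Sigma$ is CMC, $H$ is constant, so by \eqref{hm2} the condition $m_H(\Sigma)=0$ is equivalent to $(H^2+4)|\Sigma|=16\pi$; in particular $\tfrac34 H^2+3=\tfrac{12\pi}{|\Sigma|}$. As $\Sigma$ is a topological sphere, Gauss--Bonnet gives $\int_\Sigma K_\Sigma\,da=4\pi$. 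Writing the stability potential $V=|A|^2+\mathrm{Ric}(\nu,\nu)$ and combining the Gauss equation $2K_\Sigma=S_g-2\mathrm{Ric}(\nu,\nu)+H^2-|A|^2$ with $|A|^2=|\mathring A|^2+\tfrac12H^2$ and the identity above, I would rewrite
\[
V=\tfrac12|\mathring A|^2+\tfrac{12\pi}{|\Sigma|}-K_\Sigma+\tfrac12\big(S_g-6\big),
\]
where $\mathring A$ denotes the traceless second fundamental form.

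The key step is a Hersch balancing argument. By uniformization there is a conformal diffeomorphism $\phi=(x_1,x_2,x_3)\colon\Sigma\to\mathbb S^2\subset\mathbb R^3$, and after composing with a Möbius transformation I may assume $\int_\Sigma x_i\,da=0$ for each $i$, so that each $x_i$ is admissible in the stability inequality $\int_\Sigma|\nabla f|^2\,da\ge\int_\Sigma V f^2\,da$ (valid for $\int_\Sigma f\,da=0$). Summing over $i$ and using $\sum_i x_i^2\equiv1$, the conformality relation $\int_\Sigma\sum_i|\nabla x_i|^2\,da=8\pi$ (twice the area of $\mathbb S^2$), and $\int_\Sigma K_\Sigma\,da=4\pi$, the inequality collapses to
\[
8\pi=\int_\Sigma\sum_i|\nabla x_i|^2\,da\ \ge\ \int_\Sigma V\,da=\tfrac12\int_\Sigma|\mathring A|^2\,da+8\pi+\tfrac12\int_\Sigma\big(S_g-6\big)\,da.
\]
Since $S_g\ge6$, all three terms on the right are nonnegative, so they must vanish: $\Sigma$ is totally umbilic ($\mathring A\equiv0$), $S_g\equiv6$ along $\Sigma$, and each $x_i$ saturates the stability inequality, hence solves $\Delta_\Sigma x_i+V x_i=c_i$ with $V=\tfrac{12\pi}{|\Sigma|}-K_\Sigma$ for constants $c_i$.

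Umbilicity alone does not give roundness: the Gauss equation now reads $K_\Sigma=\tfrac14H^2+K_M$, where $K_M$ is the ambient sectional curvature of the tangent plane, which may vary. Writing the induced metric as $g_\Sigma=e^{2u}g_0$ over the round metric $g_0$ of area $|\Sigma|$, the Liouville equation $K_\Sigma e^{2u}=\tfrac{4\pi}{|\Sigma|}-\Delta_0 u$ together with the saturated relations $\Delta_\Sigma x_i+V x_i=c_i$ translates into a mean field type equation for $u$ whose trivial solution $u\equiv0$ is the round metric, exactly as in \cite{sun}. \emph{The main obstacle is here.} I would show that the balancing conditions $\int_\Sigma x_i\,da=0$ remove the first spherical harmonics from the kernel of the linearization, so the trivial solution is nondegenerate, and then use the hypothesis that $\Sigma$ is approximately round (so $u$ is $C^0$‑small) to conclude by a uniqueness/implicit‑function argument that $u\equiv0$. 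Hence $K_\Sigma\equiv\tfrac{4\pi}{|\Sigma|}$ and $\Sigma$ is isometric to $\mathbb S^2(\sqrt{|\Sigma|/4\pi})$.

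For the final assertion I use $\mathrm{Ric}_g\ge2$. Since $\Sigma$ is totally umbilic its principal curvatures equal $\tfrac12H$, which are nonnegative by mean‑convexity of $\Omega$, so $\partial\Omega=\Sigma$ is a convex, constantly curved sphere; moreover umbilicity together with $S_g\equiv6$ and $K_\Sigma\equiv\tfrac{4\pi}{|\Sigma|}$ forces $\mathrm{Ric}(\nu,\nu)\equiv2$ along $\Sigma$, matching the boundary data of a geodesic ball in $\mathbb S^3_+$. With $\mathrm{Ric}_g\ge2$ on the compact region $\Omega$ and such a boundary, the Hang--Wang rigidity theorem \cite{hangwang} applies and identifies $\Omega$ with the geodesic ball in $\mathbb S^3_+$ bounded by $\mathbb S^2(\sqrt{|\Sigma|/4\pi})$, completing the proof.
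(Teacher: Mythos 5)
Your overall strategy coincides with the paper's (Hersch balancing to force umbilicity and $S_g\equiv 6$ along $\Sigma$, reduction to a mean field equation for the conformal factor, uniqueness of the trivial solution under the almost-roundness hypothesis, then Lawson plus Hang--Wang for the ambient rigidity), but the central analytic step is not actually carried out, and the mechanism you propose for it would fail. First, to obtain a closed equation for the conformal factor you need the pointwise identity $\sum_i|\nabla x_i|^2=\frac{12\pi}{|\Sigma|}-K_\Sigma$, which requires the balanced components to satisfy the eigenfunction equation $\Delta_\Sigma x_i+\big(\tfrac{12\pi}{|\Sigma|}-K_\Sigma\big)x_i=0$ with \emph{no} additive constants. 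Your saturation argument only yields $\Delta_\Sigma x_i+Vx_i=c_i$ with unknown Lagrange multipliers $c_i$, and you do not show $c_i=0$; with nonzero $c_i$ the identity picks up the extra term $\sum_i c_i x_i$ and the equation for $u$ does not close. The paper sidesteps this by invoking the equality case of the El Soufi--Ilias bound on the second eigenvalue of $\Delta_\Sigma-K_\Sigma$ (Lemma \ref{lemma3sun}), which produces genuine eigenfunctions and hence the identity \eqref{3.14sun} directly.

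Second, and more seriously: the resulting equation is $\Delta_{g_0}w=6-6e^w$ on the unit round sphere, whose linearization at $w=0$ is $\Delta_{g_0}+6$. Since $6=2\cdot 3$ is an eigenvalue of $-\Delta_{g_0}$, realized on the \emph{second}-degree spherical harmonics, the trivial solution is degenerate with a five-dimensional kernel, while the balancing conditions $\int_\Sigma x_i\,da=0$ only control the first-degree harmonics. Your claim that balancing renders $w=0$ nondegenerate, so that a uniqueness/implicit-function argument applies, is therefore incorrect; this degeneracy is exactly why the paper must appeal to \cite[Lemma 10]{sun}, a genuine uniqueness theorem for $C^0$-small solutions of this mean field equation, rather than to the inverse function theorem. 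Until that step is supplied (by citing Sun's lemma or reproving it), the proof of the first assertion is incomplete. The final step is essentially right, though you should note that one first uses positive Ricci curvature (via \cite{Lawson70}) to see that $\Sigma$ separates $M$ into two components before applying \cite[Theorem 3]{hangwang} to the mean convex side; the boundary-data matching you describe is not needed as an input to that theorem.
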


In our next result, we extend the ideas presented in \cite[Theorem 2]{shi2019}  to the positive curvature case. 
We recall that $\Sigma$ has 
the even symmetry indicating the presence of an isometry $\rho: \Sigma \rightarrow \Sigma$ satisfying the conditions $\rho^2 = \text{id}$ and $\rho(x) \neq x$ for $x \in \Sigma$.

\begin{theorem}\label{teo2}
Let $(M^3, g)$ be a complete Riemannian manifold with $S_g\geq 6$, and  $\Sigma$ be a stable sphere with constant mean curvature  in $M^3$. If $m_H(\Sigma) = 0$ and $\Sigma$ has the even symmetry, then  $\Sigma$ is isometric to $\mathbb{S}^2(\sqrt{|\Sigma|}/{4\pi})$. Moreover, if we assume $\textrm{Ric}_g \geq 2$, then the mean convex region $\Omega\subset M$ whose boundary is $\Sigma$ is isometric to a geodesic ball in $\mathbb{S}^3_+$ whose boundary is $ \mathbb{S}^2(\sqrt{|\Sigma|}/{4\pi})$.
\end{theorem}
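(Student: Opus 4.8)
The plan is to follow the strategy of Sun \cite{sun} and Shi et al.\ \cite{shi2019}: convert the hypotheses into a mean field equation for the conformal factor of $\Sigma$, and then use the even symmetry to force the trivial solution. Since $H$ is constant and $m_H(\Sigma)=0$, identity \eqref{hm2} gives $(H^2+4)|\Sigma|=16\pi$, so in particular $|\Sigma|\le 4\pi$. I would then combine the CMC stability inequality with Hersch's balancing trick. Uniformizing, write $g_\Sigma=e^{2u}g_0$ with $g_0$ the round metric of area $|\Sigma|$ and Gaussian curvature $K_0=4\pi/|\Sigma|$, and let $\phi_1,\phi_2,\phi_3$ be the first eigenfunctions of $\Delta_{g_0}$ (eigenvalue $2K_0$, normalized by $\sum_i\phi_i^2=1$). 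By the Gauss equation the Jacobi potential is $|A|^2+\mathrm{Ric}(\nu,\nu)=\tfrac12|\mathring A|^2+\tfrac34 H^2+\tfrac12 S_g-K_\Sigma$. Under the even symmetry the $\phi_i$ are odd while $u$ is even, so the balancing condition $\int_\Sigma\phi_i\,dA=0$ holds automatically; testing stability with each $\phi_i$, summing, and using the $2$-dimensional conformal invariance of the Dirichlet energy together with Gauss--Bonnet yields $\int_\Sigma(|A|^2+\mathrm{Ric}(\nu,\nu))\,dA\le 8\pi$.

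Inserting the mass identity and $S_g\ge 6$ into this bound, a direct computation collapses it to $\tfrac12\int_\Sigma|\mathring A|^2+\tfrac12\int_\Sigma(S_g-6)\le 0$. This forces $\mathring A\equiv 0$, so $\Sigma$ is totally umbilic, together with $S_g\equiv 6$ on $\Sigma$ and equality in every test function. Equality means each $\phi_i$ is a null eigenfunction of the Jacobi operator, $\Delta_g\phi_i+(|A|^2+\mathrm{Ric}(\nu,\nu))\phi_i=0$; since $\Delta_g\phi_i=-2K_0e^{-2u}\phi_i$ and $\sum_i\phi_i^2=1$, this gives the pointwise identity $|A|^2+\mathrm{Ric}(\nu,\nu)=2K_0e^{-2u}$. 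Substituting the umbilic values and the conformal formula $K_\Sigma=e^{-2u}(K_0-\Delta_{g_0}u)$ produces the mean field equation $\Delta_{g_0}u+3K_0(e^{2u}-1)=0$, whose trivial solution $u\equiv 0$ corresponds to the round metric.

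The main obstacle is the uniqueness step: showing that under the even symmetry the only solution of this equation is $u\equiv 0$. Here I would argue as in \cite[Theorem 2]{shi2019}, choosing $g_0$ so that $\rho$ is the antipodal isometry and $u$ is an even function; the content is that a $\rho$-invariant solution cannot concentrate or shift off-center, which rules out the nontrivial conformal-bubble solutions and leaves only $u\equiv 0$. Note that the resulting equation is the \emph{same} as in the $S_g\ge 0$ and $S_g\ge -6$ cases, so the intrinsic part transfers verbatim; the positivity $S_g\ge 6$ enters only through the constants. This forces $K_\Sigma\equiv K_0$, hence $\Sigma$ is isometric to the round sphere $\mathbb{S}^2(\sqrt{|\Sigma|/4\pi})$ of area $|\Sigma|$.

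For the ambient rigidity, assume $\mathrm{Ric}_g\ge 2$. Evaluating the pointwise identity above at $u\equiv 0$, with $|A|^2=\tfrac12 H^2$, gives $\mathrm{Ric}(\nu,\nu)=2K_0-\tfrac12 H^2=2$ along $\Sigma$, so the Ricci bound is saturated in the normal direction and $\Sigma$ is a convex, umbilic, round boundary sphere with the borderline Ricci data that seeds rigidity. I would then verify the hypotheses of the Hang--Wang theorem \cite{hangwang} and invoke it to conclude that $\Omega$ is isometric to a geodesic ball in $\mathbb{S}^3_+$ with boundary $\mathbb{S}^2(\sqrt{|\Sigma|/4\pi})$. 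The delicate point in this final step is matching normalizations: the radius-$R$ boundary sphere (with $R=\sqrt{|\Sigma|/4\pi}\le 1$) and the saturated Ricci data must be fit to a geodesic ball of radius $\rho$ with $\sin\rho=R$, rather than to the full hemisphere, and one must check that the convexity and induced-metric hypotheses of \cite{hangwang} hold in this scaling.
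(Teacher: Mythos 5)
Your proposal is correct and follows essentially the same route as the paper: stability plus the Hersch balancing trick and the equality analysis reduce everything to the mean field equation $\Delta_{g_0}w=6-6e^{w}$, whose antipodally symmetric solutions are trivial by Shi et al., and Hang--Wang then gives the ambient rigidity. The only organizational difference is that you arrange the uniformization upfront so that $\rho$ becomes the antipodal map (making the balancing condition automatic), whereas the paper works with the balanced map of Proposition \ref{prop1sun} and proves a posteriori --- via the computation showing $e^{z}=1$ and the orthogonal-matrix argument --- that the induced conformal involution of $\mathbb{S}^2$ is exactly the antipodal map; both orderings are valid.
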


In the case where $M$ is compact and $\Sigma$ is minimal, we obtain stronger rigidity results.

\begin{corollary}
Let $(M^3, g)$ be a compact Riemannian manifold with $S_g\geq 6$, and  $\Sigma$ be a stable minimal sphere   in $M^3$ for volume preserving variations such that $m_H(\Sigma) = 0$. If $\Sigma$ is approximately a round sphere or if $\Sigma$ has the even symmetry, then  $\Sigma$ is isometric to $\mathbb{S}^2$. Moreover, if we assume $\textrm{Ric}_g \geq 2$, then $M^3$ is isometric to $\mathbb{S}^3.$
\end{corollary}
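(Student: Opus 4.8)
The plan is to deduce the Corollary directly from Theorems~\ref{teo1} and \ref{teo2}, the only genuinely new ingredient being a doubling argument in the minimal setting. First I would observe that a minimal surface satisfies $H\equiv 0$, so it is a CMC surface with constant mean curvature zero, and that stability for volume preserving variations is exactly the stability hypothesis of those theorems; moreover $M$ compact implies $M$ complete, so the remaining hypotheses are in force once the area is pinned down. Substituting $H\equiv 0$ into \eqref{hm2} yields
\[
m_H(\Sigma)=\frac{|\Sigma|^{1/2}}{(16\pi)^{3/2}}\bigl(16\pi-4|\Sigma|\bigr),
\]
so $m_H(\Sigma)=0$ forces $|\Sigma|=4\pi$; the round sphere of area $4\pi$ is the unit sphere, hence the model sphere appearing in Theorems~\ref{teo1}--\ref{teo2} is $\mathbb{S}^2$. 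Depending on whether $\Sigma$ is approximately round or carries an even symmetry, I would then invoke Theorem~\ref{teo1} or Theorem~\ref{teo2} to conclude that $\Sigma$ is isometric to $\mathbb{S}^2$, which is the first assertion.

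For the rigidity of the ambient space under $\mathrm{Ric}_g\ge 2$, the same theorem gives that the mean convex region $\Omega$ with $\partial\Omega=\Sigma$ is isometric to a geodesic ball in $\mathbb{S}^3_+$ whose boundary is the equatorial $\mathbb{S}^2$. Since $\Sigma$ is minimal its mean curvature vanishes, so this geodesic ball is in fact the full hemisphere $\mathbb{S}^3_+$ and $\Sigma$ sits in it as a totally geodesic equator. Because $H\equiv 0$, both sides of $\Sigma$ are mean convex; thus $\Sigma$ separates $M$ into $\Omega$ and the closure $\Omega'$ of its complement, each a compact region with boundary $\Sigma$. As $\Sigma$ is still stable, minimal, and approximately round (respectively evenly symmetric) when viewed as $\partial\Omega'$, I would apply the very same theorem once more to deduce that $\Omega'$ is also isometric to a hemisphere. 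Gluing the two hemispheres along their common boundary $\Sigma$ then reconstitutes the round sphere, giving $M\cong\mathbb{S}^3$.

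The main obstacle, and where I expect to spend the most care, is this final gluing. I must check that the minimal sphere $\Sigma$ genuinely separates $M$, so that the complementary region $\Omega'$ is well defined and bounded by $\Sigma$; that $\Omega'$ verifies all hypotheses of Theorems~\ref{teo1}/\ref{teo2}, which amounts to noting that stability and the roundness or symmetry of $\Sigma$ are intrinsic and independent of the chosen side; and, most delicately, that matching two copies of $\mathbb{S}^3_+$ along an equator at which both second fundamental forms vanish produces a \emph{smooth} Riemannian metric. The vanishing of the second fundamental form on each side is precisely what guarantees that the doubled metric is $C^\infty$ across $\Sigma$, identifying $M$ with the standard double of the hemisphere, namely $\mathbb{S}^3$.
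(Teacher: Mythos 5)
Your proposal is correct and follows essentially the same route as the paper: reduce to $|\Sigma|=4\pi$ (equivalently, use that $\mathrm{H}^2=16\pi/|\Sigma|-4=0$ forces $\Sigma$ totally geodesic and isometric to the unit $\mathbb{S}^2$), use positive Ricci curvature to see that $\Sigma$ separates $M$ into two components each bounded by $\Sigma$, identify each component with a hemisphere, and double. The only cosmetic difference is that the paper applies Hang--Wang's rigidity theorem for minimal boundary (\cite[Theorem 2]{hangwang}) directly to each component, whereas you route through the ``moreover'' clause of Theorems~\ref{teo1}--\ref{teo2} twice; both reduce to the same Hang--Wang application on each side.
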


In fact, in this scenario, $\Omega\subset M$ has exactly two connected components of $M\setminus\Sigma$ (see \cite{Lawson70}), and we can conclude from Hang-Wang \cite{hangwang} that each component is isometric to a hemisphere.

\subsection*{Acknowledgments}
This work is part of the author’s Ph.D. thesis at Federal University of Alagoas, Brazil. The author would like to thank his advisor Marcos Cavalcante for his constant encouragement and advice. The author was partially supported by CAPES-Brazil.

\section{Preliminaries results}

We recall that a CMC surface is stable if the second variation of the area functional is nonnegative for volume preserving  variations.  It is well known that  the second variation of the area functional is given by a quadratic  form associated to the Jacobi operator $J$ acting on functions of zero mean and defined as 
\begin{eqnarray*}
    J  = \Delta_\Sigma + |A|^2 + \text{Ric}(N, N),
\end{eqnarray*}
where $\Delta_\Sigma$ is the Laplace-Beltrami operator and $A$ is the shape operator of $\Sigma$. We recall that the index of $\Sigma$ is the number of negative eigenvalues of $J$, counted with multiplicity, and it indicates the number of linearly independent volume preserving variations which decreases area. $\Sigma$ is said to be stable when the index is zero.

It was shown by Christodoulou and Yau \cite{c1988} that the Hawking mass (\ref{hawkingmass}) for $\Lambda=0$ is non-negative for stable spheres in Riemannian manifolds with scalar curvature $S_g \geq 0$. A similar result for manifolds with scalar curvature $S_g \geq -6$ was proven by Chodosh in \cite{cho2016}. 
Simirlarly to the results in \cite{c1988, cho2016}, we use a Hersch-type trick to prove the nonnegativity of the Hawking mass (\ref{hm2}) when $\Lambda=6$.  
\begin{lemma}\label{6sun}
Let $(M^3, g)$ be a complete Riemannian manifold with scalar curvature $S_g \geq 6$. If $\Sigma$ is a stable CMC sphere, then $m_H(\Sigma) \geq 0$.
\end{lemma}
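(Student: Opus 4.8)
The plan is to adapt the classical Hersch--Christodoulou--Yau argument to the case $\Lambda = 6$. Since $\Sigma$ is a topological sphere, the uniformization theorem provides a conformal diffeomorphism $\phi = (x_1,x_2,x_3)\colon \Sigma \to \mathbb{S}^2 \subset \mathbb{R}^3$ onto the round unit sphere. First I would invoke Hersch's balancing lemma: by precomposing $\phi$ with a suitable conformal automorphism (M\"obius transformation) of $\mathbb{S}^2$, one may assume that the center of mass vanishes, that is, $\int_\Sigma x_i\, da = 0$ for $i=1,2,3$. This makes each $x_i$ an admissible test function for the volume-preserving stability of $\Sigma$.

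Next I would feed the $x_i$ into the stability inequality $\int_\Sigma |\nabla u|^2\, da \geq \int_\Sigma (|A|^2 + \mathrm{Ric}(N,N))\,u^2\, da$ and sum over $i$. Using $\sum_i x_i^2 \equiv 1$ on the right-hand side, together with the conformal invariance of the Dirichlet energy in dimension two, which forces $\sum_i \int_\Sigma |\nabla x_i|^2\, da = 8\pi$ (twice the area of $\mathbb{S}^2$), this yields
\begin{equation*}
8\pi \geq \int_\Sigma \big(|A|^2 + \mathrm{Ric}(N,N)\big)\, da.
\end{equation*}

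The heart of the computation is then to rewrite the integrand by means of the Gauss equation $2K_\Sigma = S_g - 2\,\mathrm{Ric}(N,N) + H^2 - |A|^2$. Solving for $\mathrm{Ric}(N,N)$ gives the pointwise identity
\begin{equation*}
|A|^2 + \mathrm{Ric}(N,N) = \tfrac{1}{2}\big(|A|^2 + H^2 + S_g\big) - K_\Sigma .
\end{equation*}
Integrating and applying Gauss--Bonnet, $\int_\Sigma K_\Sigma\, da = 4\pi$, the stability bound becomes $\int_\Sigma (|A|^2 + H^2 + S_g)\, da \leq 24\pi$. Finally I would insert the two pointwise inequalities $S_g \geq 6$ and $|A|^2 \geq \tfrac{1}{2}H^2$ (the latter from the decomposition $|A|^2 = \tfrac12 H^2 + |\mathring A|^2$), obtaining $\tfrac{3}{2}\int_\Sigma H^2\, da + 6|\Sigma| \leq 24\pi$, which after dividing by $3/2$ is exactly $\int_\Sigma (H^2 + 4)\, da \leq 16\pi$. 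By formula (\ref{hm2}) this is precisely $m_H(\Sigma) \geq 0$.

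The main obstacle is the balancing step, namely guaranteeing a conformal parametrization with vanishing center of mass. This is the Hersch trick proper: it is not a computation but a topological fact, established by showing that the map sending a point of the open unit ball to the center of mass of the correspondingly rescaled parametrization extends continuously to the closed ball and, via a degree or fixed-point argument, must attain the origin. Once the test functions are legitimate, the remainder is the bookkeeping above, whose only arithmetic subtlety is checking that the coefficients $\tfrac32$ and $6$ scale together so that the stability estimate converts cleanly into the sharp Hawking-mass inequality.
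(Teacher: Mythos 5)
Your proposal is correct and follows essentially the same route as the paper: balanced conformal coordinate functions of a degree-one conformal map $\Sigma\to\mathbb{S}^2$ as test functions in the stability inequality, the traced Gauss equation to rewrite $|A|^2+\mathrm{Ric}(N,N)$, Gauss--Bonnet, and the pointwise bounds $S_g\geq 6$ and $|A|^2\geq\tfrac12 H^2$ to reach $\int_\Sigma(H^2+4)\,da\leq 16\pi$. The only cosmetic difference is that the paper obtains the balanced map by citing a lemma valid for arbitrary genus (with the degree bound $[\tfrac{g+3}{2}]$) rather than invoking uniformization plus the Hersch fixed-point argument directly, and it records the deficit explicitly as $\tfrac{2}{3}\int_\Sigma|A^0|^2\,da\geq 0$, which it then exploits in the subsequent rigidity analysis.
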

\begin{proof}
Since $\Sigma$ is stable, we have
\begin{eqnarray}\label{indice1}
\int_M (|\nabla f|^2 - 
\left(|A|^2 + \text{Ric}(N, N) \right)f^2)da \geq 0 ,
\end{eqnarray}  
for all $f \in C^{\infty}(\Sigma)$ with $\int_\Sigma f \, da = 0$.
By \cite[Lemma 5.1]{CFP15}, there exists a conformal mapping $\varphi : \Sigma \rightarrow \mathbb{S}^2$ such that $\text{deg}(\varphi)\leq [\frac{g+3}{2}]$ and
$\int_\Sigma \varphi_i  da =0$,
where $\varphi = (\varphi_1, \varphi_2, \varphi_3)$ by considering the canonical embedding $\mathbb S^2\subset \mathbb R^3$.
Using $\varphi_i$ as test functions in (\ref{indice1}), we have
\begin{eqnarray*}
\int_{\Sigma} |\nabla \varphi_i|^2da  \geq \int_{\Sigma} (|A|^2 + \text{Ric}(N, N)) \varphi_i^2 da.
\end{eqnarray*}

Using the fact that $\varphi$ is conformal and 
$\Sigma$ has genus zero, we have
\begin{eqnarray*}
\sum_i \int_{\Sigma} |\nabla \varphi_i|^2 da = \int_\Sigma |\nabla \varphi|^2 da= 2 |\mathbb {S}^2|\text{deg}(\varphi) \leq 8\pi \Big[\frac{g+3}{2} \Big] = 8\pi,
\end{eqnarray*}
and using that $|\varphi|^2 = \sum \varphi_i^2 = 1$, we obtain
\begin{eqnarray*}
 \int_{\Sigma} (|A|^2 + \text{Ric}(N, N)) da \leq 8\pi. 
\end{eqnarray*}

By the Gauss equation
\begin{eqnarray*}
|A|^2 + \text{Ric}_g(N,N) &=& \frac{1}{2} S_g - K_\Sigma + \frac{1}{2} (\mathrm{H}^2 + |A|^2) \\
&=& \frac{1}{2} (S_g + |A^0|^2)  + \frac{3}{4} \mathrm{H}^2  - K_\Sigma,
\end{eqnarray*}
where we used $|A|^2 = |A^0|^2 + \frac{1}{2} \mathrm{H}^2$. 
Thus we obtain
\begin{eqnarray*}
 \frac{1}{2} \int_{\Sigma} (S_g + |A^0|^2 )da + \frac{3}{4} \int_{\Sigma} \mathrm{H}^2 da  - \int_{\Sigma} K_\Sigma da \leq 8\pi. 
\end{eqnarray*}

Using our assumption on the scalar curvature, we have
\begin{eqnarray*}\label{h2}
16\pi - \int_{\Sigma}( \mathrm{H}^2 + 4) da  \geq \frac{2}{3} \int_{\Sigma} |A^0|^2  da\geq 0,
\end{eqnarray*}
and it implies $m_H(\Sigma)\geq 0$ as claimed. 
\end{proof}

    We will now need the following lemma due to El Soufi and Ilias \cite{el1992}, which provides an optimal estimate of the second eigenvalue of a Schrödinger operator. The lemma also establishes rigidity for the second eigenvalue, which can then be applied to the Jacobi operator of CMC spheres.
\begin{lemma}\label{lemma3sun}\cite{el1992} For any continuous function $q$ on a surface $\Sigma$, we have
\begin{eqnarray*}
\lambda_1( \Delta_\Sigma - q ) |\Sigma| \leq 2 A_c(\Sigma) + \int_\Sigma q da.
\end{eqnarray*}
Equality holds if and only if $\Sigma$ admits a conformal map to $\mathbb{S}^2$ whose components are the first eigenfunctions. 
If $\Sigma$ has genus zero, then equality implies that $\Sigma$ is conformal to $\mathbb{S}^2$ in $\mathbb{R}^3$, and $q$ is given by the energy density of a Möbius transformation. 
Here, $\lambda_1$ is the first eigenvalue of $\Delta_\Sigma - q$, and $A_c(\Sigma)$ is the conformal volume, which is $4\pi$ for a sphere.
\end{lemma}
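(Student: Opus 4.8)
The plan is to establish this by the Hersch--Li--Yau conformal volume method, adapted to the Schr\"odinger operator and to the \emph{second} eigenvalue $\lambda_1$ (in the indexing $\lambda_0 \le \lambda_1 \le \cdots$; it is the first nonzero eigenvalue when $q \equiv 0$, which is the one relevant for the Jacobi operator and the one realizing the equality case). I adopt the geometric sign convention in which the eigenvalues of $\Delta_\Sigma - q$ are defined by $(\Delta_\Sigma - q)\phi_k + \lambda_k \phi_k = 0$; they then coincide with the eigenvalues of the Schr\"odinger operator $-\Delta_\Sigma + q$ and satisfy $\lambda_1 = \inf\{ \int_\Sigma (|\nabla v|^2 + q v^2)\,da \,/\, \int_\Sigma v^2\,da : \ v \perp_{L^2} \phi_0 \}$, where $\phi_0 > 0$ is the ground state. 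One checks that on the round $\mathbb S^2$ with $q \equiv 0$ this gives $\lambda_1 = 2$ and turns the asserted inequality into the equality $8\pi = 8\pi$, which fixes all signs.

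First I would produce a conformal map $\phi = (\phi_1,\phi_2,\phi_3)\colon \Sigma \to \mathbb S^2 \subset \mathbb R^3$: for genus zero the uniformization theorem identifies $\Sigma$ conformally with the round sphere, so $\phi$ exists and $A_c(\Sigma) = 4\pi$; in general one takes $\phi$ nearly realizing $A_c(\Sigma)$. The components $\phi_i$ are the natural trial functions, but to use them in the variational characterization of $\lambda_1$ I must first arrange that each is orthogonal to the ground state, $\int_\Sigma \phi_i\,\phi_0\,da = 0$ for $i = 1,2,3$.

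Securing this orthogonality is the Hersch renormalization, and I expect it to be the crux of the argument. I would push the measure $d\mu = \phi_0\,da$ forward under $\phi$ and apply the balancing lemma: there is a conformal (M\"obius) automorphism $\gamma$ of $\mathbb S^2$ that moves the $\mu$-center of mass to the origin, i.e.\ $\int_\Sigma (\gamma\circ\phi)_i\,\phi_0\,da = 0$. Its existence is a topological degree / fixed-point argument on the group of conformal automorphisms (identified with the open unit ball), which uses $\phi_0 > 0$ so that $\mu$ charges no single point. Writing $\psi = \gamma\circ\phi$, the $\psi_i$ are now admissible and give $\lambda_1 \int_\Sigma \psi_i^2\,da \le \int_\Sigma |\nabla \psi_i|^2\,da + \int_\Sigma q\,\psi_i^2\,da$ for each $i$.

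Finally I would sum over $i$ and use two elementary facts: since $\psi$ maps into the unit sphere, $\sum_i \psi_i^2 \equiv 1$, so $\sum_i \int_\Sigma \psi_i^2\,da = |\Sigma|$ and $\sum_i \int_\Sigma q\,\psi_i^2\,da = \int_\Sigma q\,da$; and, $\psi$ being conformal into a surface, $\sum_i \int_\Sigma |\nabla \psi_i|^2\,da = 2\,\mathrm{Area}(\psi) \le 2 A_c(\Sigma)$ by the definition of the conformal volume. This yields $\lambda_1 |\Sigma| \le 2 A_c(\Sigma) + \int_\Sigma q\,da$. For the rigidity statement, equality forces equality at each step: each $\psi_i$ is a genuine $\lambda_1$-eigenfunction and $\mathrm{Area}(\psi) = A_c(\Sigma)$. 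In genus zero the latter (Li--Yau rigidity) identifies $\Sigma$ conformally with the standard $\mathbb S^2 \subset \mathbb R^3$; writing the conformal factor as $\psi^* g_{\mathbb S^2} = e\, g_\Sigma$ and using $-\Delta_\Sigma \psi_i = 2 e\,\psi_i$, the eigenvalue equation forces $q = \lambda_1 - 2e$, so that $q$ is exactly the energy density of the M\"obius map $\psi$ up to the additive constant $\lambda_1$, as claimed.
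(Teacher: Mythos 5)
The paper gives no proof of this lemma at all --- it is quoted verbatim from \cite{el1992} --- so there is nothing internal to compare against; your reconstruction is the standard Hersch--Li--Yau conformal-volume argument (balance the coordinate functions of a conformal map to $\mathbb{S}^2$ against the positive ground state via a M\"obius transformation, sum the Rayleigh quotients, and use $\sum_i\psi_i^2=1$ together with $\sum_i\int|\nabla\psi_i|^2\,da=2\,\mathrm{Area}(\psi)\le 2A_c(\Sigma)$), which is exactly the proof in the cited source, and it is correct. You also correctly resolve the one genuinely delicate point, namely that the paper's ``$\lambda_1$'' must be read as the first eigenvalue \emph{above} the ground state (so that the trial functions need only be orthogonal to $\phi_0$, and so that the equality case on the round sphere, $\lambda_1=3$ for $-\Delta+1$, comes out right), and your equality-case computation $q=\lambda_1-2e$ makes precise the paper's looser phrase that $q$ ``is given by the energy density of a M\"obius transformation.''
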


Using the lemma above, we obtain a characterization for stable CMC spheres with zero Hawking mass, generalizing  \cite{sun}.

\begin{proposition}\label{prop1sun}
Let $(M^3, g)$ be a complete Riemannian manifold with scalar curvature $S_g \geq 6$. If $\Sigma$ is a stable CMC sphere with $m_H(\Sigma) = 0$, then the fist eigenvalue $\lambda_1( \Delta - K_\Sigma ) = \frac{12\pi}{|\Sigma|}$ with three eigenfunctions $\varphi_1, \varphi_2, \varphi_3$, where $\int_\Sigma \varphi_i  da =0$ and $\sum \varphi_i^2 = 1$. In particular, $\Sigma$ is totally umbilical, is conformal to $\mathbb{S}^2$ in $\mathbb{R}^3$ and $\sum |\nabla \varphi_i|^2 = \frac{12\pi}{|\Sigma|} - K_\Sigma$, which is independent of the eigenfunctions.
\end{proposition}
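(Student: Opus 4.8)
The plan is to pin down $\lambda_1(\Delta_\Sigma - K_\Sigma)$ by trapping it between a lower bound coming from stability and the upper bound of Lemma \ref{lemma3sun}, and then to read off every remaining assertion from the equality case of that lemma. First I would harvest total umbilicity essentially for free. The chain of inequalities established in the proof of Lemma \ref{6sun} reads $16\pi - \int_\Sigma(\mathrm H^2+4)\,da \ge \tfrac23\int_\Sigma |A^0|^2\,da \ge 0$, and since $m_H(\Sigma)=0$ forces the left-hand side to vanish by (\ref{hm2}), we get $\int_\Sigma |A^0|^2\,da = 0$, hence $A^0\equiv 0$ and $\Sigma$ is totally umbilical. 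Because $\Sigma$ is CMC, $\mathrm H$ is then a genuine constant, so $\int_\Sigma \mathrm H^2\,da = \mathrm H^2|\Sigma|$ and the condition $m_H(\Sigma)=0$ becomes $\mathrm H^2|\Sigma| = 16\pi - 4|\Sigma|$. This yields the numerical normalization $3 + \tfrac34 \mathrm H^2 = \tfrac{12\pi}{|\Sigma|}$, which is the arithmetic source of the constant in the statement.

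For the lower bound I would substitute the Gauss identity into the stability inequality (\ref{indice1}). With $A^0\equiv 0$ the Gauss computation of Lemma \ref{6sun} gives $|A|^2 + \mathrm{Ric}(N,N) = \tfrac12 S_g + \tfrac34 \mathrm H^2 - K_\Sigma \ge 3 + \tfrac34 \mathrm H^2 - K_\Sigma$, using $S_g\ge 6$ pointwise. Inserting this into (\ref{indice1}) produces, for every $f$ with $\int_\Sigma f\,da = 0$, the estimate $\int_\Sigma(|\nabla f|^2 + K_\Sigma f^2)\,da \ge \big(3 + \tfrac34 \mathrm H^2\big)\int_\Sigma f^2\,da = \tfrac{12\pi}{|\Sigma|}\int_\Sigma f^2\,da$. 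Thus the Rayleigh quotient of the Schrödinger operator $\Delta_\Sigma - K_\Sigma$ is at least $\tfrac{12\pi}{|\Sigma|}$ on the codimension-one subspace of mean-zero functions. A two-dimensional subspace on which the quotient were smaller would meet this subspace nontrivially, so the min-max characterization of the second eigenvalue forces $\lambda_1(\Delta_\Sigma - K_\Sigma) \ge \tfrac{12\pi}{|\Sigma|}$.

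For the matching upper bound I would apply Lemma \ref{lemma3sun} with $q = K_\Sigma$. Since $\Sigma$ is a topological sphere, Gauss--Bonnet gives $\int_\Sigma K_\Sigma\,da = 4\pi$ and $A_c(\Sigma)=4\pi$, whence $\lambda_1(\Delta_\Sigma - K_\Sigma)\,|\Sigma| \le 2\cdot 4\pi + 4\pi = 12\pi$. Combining the two inequalities gives $\lambda_1(\Delta_\Sigma - K_\Sigma)=\tfrac{12\pi}{|\Sigma|}$, so equality holds in Lemma \ref{lemma3sun}. Its genus-zero rigidity clause then supplies a conformal map $\varphi = (\varphi_1,\varphi_2,\varphi_3)\colon \Sigma \to \mathbb S^2\subset\mathbb R^3$ whose components are first eigenfunctions; after the usual Hersch balancing we may arrange $\int_\Sigma \varphi_i\,da = 0$, and $\sum_i \varphi_i^2 = |\varphi|^2 = 1$ because $\varphi$ lands in the unit sphere. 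This is precisely the claim that $\Sigma$ is conformal to $\mathbb S^2$ in $\mathbb R^3$ with the stated three eigenfunctions.

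The pointwise gradient identity is then immediate: multiplying the eigenvalue equation $(\Delta_\Sigma - K_\Sigma)\varphi_i = \tfrac{12\pi}{|\Sigma|}\varphi_i$ by $\varphi_i$, summing over $i$, and using $\sum_i \varphi_i^2 = 1$ (so that the Laplacian of $\sum_i\varphi_i^2$ vanishes and the product rule collapses the sum to the gradient term) gives the pointwise relation $\sum_i|\nabla\varphi_i|^2 = \tfrac{12\pi}{|\Sigma|} - K_\Sigma$, which manifestly depends only on $K_\Sigma$ and not on the chosen orthonormal eigenbasis, since an $O(3)$ change of the $\varphi_i$ preserves both $\sum_i\varphi_i^2$ and $\sum_i|\nabla\varphi_i|^2$. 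I expect the delicate point to be the lower bound: the stability inequality only sees the mean-zero directions, so upgrading it to a genuine bound on the second eigenvalue of the full operator requires the codimension/min-max argument above, and one must correctly feed in the $m_H(\Sigma)=0$ normalization $3+\tfrac34\mathrm H^2 = \tfrac{12\pi}{|\Sigma|}$ so that the two constants agree exactly.
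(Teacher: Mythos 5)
Your proof is correct and follows essentially the same route as the paper: equality throughout Lemma \ref{6sun} gives total umbilicity and the normalization $3+\tfrac34\mathrm H^2=\tfrac{12\pi}{|\Sigma|}$, stability combined with the Gauss equation gives the lower bound on the (second) eigenvalue, and Lemma \ref{lemma3sun} gives the matching upper bound whose equality case produces the conformal map, the eigenfunctions, and hence the pointwise gradient identity. The only cosmetic difference is that the paper packages the two bounds as $0\le\lambda_1(J)\,|\Sigma|\le 8\pi+4\pi-12\pi=0$ for the Jacobi operator $J=\Delta_\Sigma-K_\Sigma+\tfrac{12\pi}{|\Sigma|}$ instead of moving the constant to the other side, and you are more explicit about the min--max step that upgrades the mean-zero stability inequality to a bound on the second eigenvalue.
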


\begin{proof}
If $m_H(\Sigma) = 0$ then all the inequalities in Lemma \ref{6sun} become equalities, we have $\int_{\Sigma} (\mathrm{H}^2 + 4) da = 16\pi$, $S_g|_\Sigma = 6$ and $|A^0|^2 = 0$ on $\Sigma$. It follows that $\mathrm{H}^2 = \frac{16\pi}{|\Sigma|} - 4$, and the Jacobi operator becomes
\begin{eqnarray}
J=\Delta_\Sigma+3
+\frac{3}{4}
\bigg(\frac{16\pi}{|\Sigma|} - 4\bigg)- K_\Sigma
=  \Delta_\Sigma  - K_\Sigma + \frac{12\pi}{|\Sigma|}. 
\end{eqnarray}

Using Lemma \ref{lemma3sun}, we have
\begin{eqnarray*}
0\leq  \lambda_1(J)|\Sigma|  \leq 8\pi + \int_\Sigma \left( K_\Sigma - \frac{12\pi}{|\Sigma|}\right) da = 8\pi+4\pi -12\pi=0,
\end{eqnarray*}
so all inequalities are equalities, and
in particular we have
\begin{eqnarray*}
\lambda_1 (\Delta_\Sigma - K_\Sigma) = \frac{12\pi}{|\Sigma|},
\end{eqnarray*}
with three eigenfunctions $\varphi_1, \varphi_2, \varphi_3$ satisfying $\int_\Sigma \varphi_i \psi da = 0$ and $\sum \varphi_i^2 = 1$. 

Therefore, 
\begin{eqnarray}\label{3.12sun}
\Delta_\Sigma \varphi  - K_\Sigma \varphi + \frac{12\pi}{|\Sigma|} \varphi = 0,
\end{eqnarray}
where $\Delta_\Sigma \varphi = (\Delta_\Sigma \varphi_1, \Delta_\Sigma \varphi_2, \Delta_\Sigma \varphi_3)$. 
From $|\varphi|^2 = \sum {\varphi_i}^2 = 1$, we have
\begin{eqnarray*}
0 = \Delta_\Sigma |\varphi|^2 = 2\sum\varphi_i\Delta_\Sigma \varphi_i+ 2\sum  |\nabla  \varphi_i|^2.
\end{eqnarray*}

Using this identity in \eqref{3.12sun}, we obtain
\begin{eqnarray}\label{3.14sun}
\sum|\nabla \varphi_i|^2 = \frac{12\pi}{|\Sigma|} - K_\Sigma.
\end{eqnarray}
\end{proof}

In the following, let us assume that $\Sigma$ is a 
stable CMC surface in $M^3$ with zero Hawking mass, and let $\varphi : \Sigma \rightarrow \mathbb{S}^2 \subset \mathbb{R}^3$ be the conformal map given in Proposition \ref{prop1sun}. Denote the metric on $\Sigma$ as $g = \varphi^* \left(e^{u}g_0\right)$, where $g_0$ is the canonical metric on $\mathbb{S}^2$. According to the definition of a conformal map,
\[
e^{-u} = \frac{1}{2} |\nabla \varphi|^2=\frac{1}{2}\sum |\nabla \varphi_i|^2,
\]
and thus using identity (\ref{3.12sun}) we get
\begin{eqnarray}\label{3.15sun}
e^{-u}=\frac{6\pi}{|\Sigma|} - \frac{1}{2}K_\Sigma.
\end{eqnarray}


The Gauss formula for the conformal change of metric yields
\begin{eqnarray}\label{3.16sun}
K_{\Sigma}  = e^{-u} \left(1 - \frac{1}{2} \Delta_{g_0} u\right).
\end{eqnarray}

Therefore, substituting \eqref{3.16sun} into \eqref{3.15sun}, we obtain

\begin{eqnarray}\label{3.17'sun}
\Delta_{g_0} u = 6 - \frac{24\pi}{|\Sigma|}e^u.
\end{eqnarray}

Now we consider a change of variable, setting $u = v + w$ where $v$ is the constant satisfying $e^v = \frac{|\Sigma|}{4\pi}$. So $\Delta_{g_0} u =\Delta_{g_0} w$ and $e^u=\frac{|\Sigma|}{4\pi}e^w$.
This leads us to
\begin{eqnarray}\label{3.17sun}
\Delta_{g_0} w = 6 - 6e^w,
\end{eqnarray}
and in particular we get
\begin{eqnarray}\label{3.18}
\frac{|\Sigma|}{4\pi}K_\Sigma - 1 = e^{-u} \bigg(3e^u - 2 \frac{|\Sigma|}{4\pi}\bigg) - 1 = 2(1 - e^{-w}).
\end{eqnarray}

 If we can show that \eqref{3.17sun} admits only the trivial solution, we will conclude that $\Sigma$ is isometric to $\mathbb{S}^2 (\sqrt{|\Sigma|}/{4\pi})$. Indeed, it follows that $g = \varphi^*(e^ug_0) = \varphi^*(\frac{|\Sigma|}{4\pi} g_0)$.

\section{Proofs of the theorems}

\subsection{Proof of Theorem \ref{teo1}}
According to Proposition \ref{prop1sun},
if $m_H(\Sigma) = 0$  then, $\mathrm{H}^2 =\frac{16\pi}{|\Sigma|} - 4, \Sigma$ is totally umbilical, $\mathrm{A} = \frac{\mathrm{H}}{2}\textrm{Id} = \sqrt{\frac{4\pi}{|\Sigma|} -1} \textrm{Id}$, which coincides with the second fundamental form of $\mathbb{S}^2(\sqrt{|\Sigma|}/{4\pi})$ immersed in $\mathbb{S}^3$, where $\textrm{Id}$ denotes the identity operator and the Jacobi operator becomes
\begin{eqnarray*}
J=  \Delta_\Sigma  - K_\Sigma + \frac{12\pi}{|\Sigma|}.
\end{eqnarray*}

From equation (\ref{3.18}), 
if $\Sigma$ is approximately a sphere, then $\frac{|\Sigma|}{4\pi}K_\Sigma$ is $C^0$ close to 1, implying that $w$ is $C^0$ close to 0.
By \cite[Lemma 10]{sun},
we conclude that $w=0$,  thus
$\Sigma$ is isometric to $\mathbb{S}^2(\sqrt{|\Sigma|}/{4\pi})$,  the sphere of constant Gaussian curvature equal to $\frac{4\pi}{|\Sigma|}$. 

Now, assuming $\text{Ric}_g\geq 2$, we can apply  \cite{Lawson70} (in fact we only need positive Ricci curvature) and conclude that $\Sigma$ divides $M$ into two components, $\Omega_1$ and $\Omega_2$, such that $\partial \Omega_1  = \partial \Omega_2= \Sigma$. 
At this point we are in position to apply 
\cite[Theorem 3]{hangwang} to conclude that if $\Omega_1 $ is the region determied by the mean convex side of $\Sigma$, then it is isometric to a ball  in $ \mathbb{S}^3_+$ and this concludes the proof.
\qed
\begin{corollary}
    Let $(M^3, g)$ be a complete Riemannian manifold with $S_g\geq 6$, and let $\Sigma$ be a stable minimal sphere  in $M^3$ for volume preserving variatoins. If $m_H(\Sigma) = 0$ and $\Sigma$ is approximately a round sphere, then  $\Sigma$ is isometric to $\mathbb{S}^2$. Moreover, if we assume $\textrm{Ric}_g \geq 2$, then  $M^3$ is isometric to $\mathbb{S}^3.$
\end{corollary}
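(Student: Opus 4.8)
The plan is to obtain the statement as a direct consequence of Theorem \ref{teo1}, exploiting that the minimality hypothesis $\mathrm{H}=0$ both fixes the area of $\Sigma$ and makes it totally geodesic.

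First I would regard the minimal sphere as a stable CMC sphere with $\mathrm{H}=0$, so that Proposition \ref{prop1sun} applies and yields $\mathrm{H}^2=\frac{16\pi}{|\Sigma|}-4$. Since $\mathrm{H}=0$, this forces $|\Sigma|=4\pi$. As $\Sigma$ is approximately round, Theorem \ref{teo1} applies and shows that $\Sigma$ is isometric to $\mathbb{S}^2(\sqrt{|\Sigma|/4\pi})$; with $|\Sigma|=4\pi$ the radius equals $1$, so $\Sigma$ is isometric to the unit round sphere $\mathbb{S}^2$. This proves the first assertion.

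For the ambient rigidity, assume $\mathrm{Ric}_g\geq 2$. Then $\mathrm{Ric}_g>0$ together with completeness gives, by Bonnet--Myers, that $M$ is compact. Since $\Sigma$ is totally umbilical with $\mathrm{H}=0$, its shape operator vanishes, $\mathrm{A}=\frac{\mathrm{H}}{2}\mathrm{Id}=0$, so $\Sigma$ is totally geodesic. By \cite{Lawson70} (positive Ricci curvature is enough), $\Sigma$ separates $M$ into exactly two components $\Omega_1,\Omega_2$, each with boundary $\Sigma$ isometric to the totally geodesic unit sphere $\mathbb{S}^2$. Each boundary is then (weakly) mean convex, so \cite[Theorem 3]{hangwang} applies to both $\Omega_1$ and $\Omega_2$ and identifies each isometrically with a closed hemisphere of $\mathbb{S}^3_+$.

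The last step is the gluing: $M=\Omega_1\cup_\Sigma\Omega_2$ is the union of two unit hemispheres along their common equator $\Sigma$, which is totally geodesic from both sides. I expect this gluing to be the only point requiring care, since one must check that the two Hang--Wang isometries can be chosen to agree along $\Sigma$ and that the metrics match smoothly across it; because $\Sigma$ is totally geodesic the second fundamental forms from both sides vanish, so the matching of the normal jets is automatic and the two hemispheres assemble into a global isometry $M\cong\mathbb{S}^3$.
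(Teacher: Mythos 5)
Your proposal is correct and follows essentially the same route as the paper: reduce to Theorem \ref{teo1} (using $\mathrm{H}=0$ together with $\mathrm{H}^2=\frac{16\pi}{|\Sigma|}-4$ to get $|\Sigma|=4\pi$ and total geodesicity), then use the separation result of \cite{Lawson70} and apply the Hang--Wang theorem to each of the two components before gluing. The only cosmetic difference is that the paper invokes \cite[Theorem 2]{hangwang} (the totally geodesic boundary version) rather than Theorem 3, and it leaves the gluing step implicit, which you spell out.
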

\begin{proof}
    It follows from the proof of the above theorem that if $\Sigma$ is minimal, then $\Sigma$ is totally geodesic and is isometric to $\mathbb{S}^2$.
Now, assuming $\text{Ric}_g\geq 2$, we can apply  \cite{Lawson70} and conclude that $\Sigma$ divides $M$ into two components, $\Omega_1$ and $\Omega_2$, such that $\partial \Omega_1  = \partial \Omega_2= \Sigma$. 
At this point we are in position to apply the classical theorem of Hang-Wang 
\cite[Theorem 2]{hangwang} to conclude that both $\Omega_1$ and $\Omega_2$ are isometric to $\mathbb{S}^3_+$, and, therefore, $M^3$ is isometric to  $\mathbb{S}^3$.
\end{proof}
\subsection{Proof of Theorem \ref{teo2}}
Let $\varphi : \Sigma \rightarrow \mathbb{S}^2 \subset \mathbb{R}^3$ be the conformal map introduced in Proposition \ref{prop1sun}, with the condition that $\int_\Sigma \varphi_i , da = 0$.

Based on our initial hypothesis, there exists an isometry 
$\rho: \Sigma \rightarrow \Sigma$, satisfying $\rho^2 = \text{id}$ and $\rho(x) \neq x$ 
for $x \in \Sigma$.
Consequently, we can define $\tilde{\rho} = \varphi \circ \rho \circ \varphi^{-1}: \mathbb{S}^2 \rightarrow \mathbb{S}^2$, which is a conformal map.

This leads us to the following  equalities:
\begin{eqnarray*}
     \varphi^*(e^u g_0) &=& g = \rho^*g = \rho^*\varphi^*(e^u g_0) = (\varphi \circ \rho)^*(e^u g_0) \\
     &=& (\tilde{\rho} \circ \varphi)^*(e^u g_0) = \varphi^*\tilde{\rho}^*(e^u g_0).
\end{eqnarray*}

Therefore, we have $e^u g_0= \tilde{\rho}^*(e^u g_0)$, which implies that $\tilde \rho$ is an isometry with respect to the metric $e^u g_0$.

Now, consider $z\in C^\infty(\mathbb S^2)$ such that $\tilde{\rho}^*g_0 = e^zg_0$. Then, we have the relationship $u = u\circ \tilde{\rho} + z$. Since $(\mathbb{S}^2,\tilde{\rho}^*g_0)$ has constant Gaussian curvature equal to 1, and as $\tilde{\rho}: (\mathbb{S}^2,\tilde{\rho}^*g_0) \rightarrow (\mathbb{S}^2,g_0)$ is an isometry, it follows that $z$ satisfies $\Delta z = 2(1-e^z)$.

Then, we can proceed as follows
\begin{eqnarray*}
    6 \bigg( 1 -\frac{24\pi}{|\Sigma|}e^u\bigg)  &=& \Delta u  = \Delta ( u \circ \tilde{\rho} ) + \Delta z = ((\Delta u ) \circ \tilde{\rho})e^z + \Delta z \\
    &=& 6 (e^z - \frac{24\pi}{|\Sigma|}e^{u\circ \tilde{\rho }+z})+ \Delta z =6 (e^z - \frac{24\pi}{|\Sigma|} e^u) + 2 (1-e^z) \\
    &=& 6(1- \frac{24\pi}{|\Sigma|} e^u) - 4(1-e^z).
\end{eqnarray*}

Thus, $e^z = 1$, $\tilde{\rho}^*g_0 = g_0$, and $\tilde{\rho}$ is an isometry of $\mathbb{S}^2$ with respect to the metric
$g_0$. 
Consequently, there exists an orthogonal matrix $A \in \mathcal{O}(3)$ such that $\tilde{\rho}x = Ax$. Since $\rho^2 = \text{id}$, we have that $\tilde{\rho}^2 = \text{id}$, and $A^2 = I_3$. The eigenvalues of $A$ must be $\pm 1$. In fact, if $Ax = \lambda x$, $x \ne 0$, we have $x = A^2x = \lambda^2 x$. Since $\rho(x) \ne x$ for $x \in \Sigma$, we have $\tilde{\rho}(x) \ne x$ for $x \in \mathbb{S}^2$, and $1$ cannot be an eigenvalue of $A$. Therefore, the eigenvalue of $A$ must be $-1$, so $A$ is similar to a diagonal matrix $-I_3 = Q^{-1}AQ$, where $Q$ is an invertible matrix, which implies that $A = -I_3$ and $\tilde{\rho}(x) = -x$. Hence, $u(x) = u\circ \tilde{\rho}(x) + z= u(-x)$.

By making again the change of variable $u = w + v$, where $e^v = \frac{|\Sigma|}{4\pi}$, we find that $w$ satisfies (\ref{3.17sun}), and $w(x) = w(-x)$. Using \cite[Theorem 1]{shi2019}, we can conclude that $w = 0$. Therefore, $\Sigma$ is isometric to $\mathbb{S}^2(\sqrt{\frac{|\Sigma|}{4\pi}})$.

Assuming that $\text{Ric}_g\geq 2$, we can proceed analogously to the proof of Theorem \ref{teo1} to conclude that $M^3$contains a region isometric to a compact set ${\Omega} \subset \mathbb{S}^3_+$ such that $\Sigma$ is isometric to $\partial \Omega$.\qed

\begin{remark}\label{final}
Using \eqref{3.18} and the result in \cite[Lemma 10]{shi2019}, we can derive $K_\Sigma \frac{|\Sigma|}{4\pi} \leq 2$. Consequently, the assumption of even symmetry can be replaced by this Gaussian curvature estimate. This results in an explicit bound in Theorem \ref{teo1}.
\end{remark}

\begin{corollary}
  Let $(M^3, g)$ be a complete Riemannian manifold with $S_g\geq 6$, and let $\Sigma$ be a  stable minimal sphere  in $M^3$ for volume preserving variatoins. If $m_H(\Sigma) = 0$ and  $\Sigma$ has the even symmetry, then  $\Sigma$ is isometric to $\mathbb{S}^2$. Moreover, if we assume $\textrm{Ric}_g \geq 2$, then  $M^3$ is isometric to $\mathbb{S}^3.$  
\end{corollary}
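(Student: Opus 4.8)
The plan is to repeat, almost verbatim, the argument used for the minimal-case corollary following Theorem~\ref{teo1}, now invoking Theorem~\ref{teo2} in place of Theorem~\ref{teo1} so that the even-symmetry hypothesis supplies the rigidity of $\Sigma$. First I would apply Theorem~\ref{teo2} to the even-symmetric stable CMC sphere $\Sigma$ with $m_H(\Sigma)=0$, which yields that $\Sigma$ is isometric to the round sphere $\mathbb{S}^2(\sqrt{|\Sigma|}/4\pi)$. To pin down the radius in the minimal case, note that $\mathrm{H}=0$ forces, via Proposition~\ref{prop1sun}, the relation $\mathrm{H}^2=\frac{16\pi}{|\Sigma|}-4=0$, hence $|\Sigma|=4\pi$ and the round sphere above has Gaussian curvature $4\pi/|\Sigma|=1$; thus $\Sigma$ is isometric to the unit sphere $\mathbb{S}^2$. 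Moreover, Proposition~\ref{prop1sun} gives that $\Sigma$ is totally umbilical with $\mathrm{A}=\frac{\mathrm{H}}{2}\,\textrm{Id}$, so in the minimal case $\mathrm{A}=0$ and $\Sigma$ is in fact totally geodesic.

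Second, under the additional hypothesis $\text{Ric}_g\geq 2$, the manifold $M$ is compact by Bonnet--Myers, and by \cite{Lawson70} the embedded sphere $\Sigma$ separates $M$ into exactly two connected components $\Omega_1,\Omega_2$ with $\partial\Omega_1=\partial\Omega_2=\Sigma$. Finally, since $\Sigma$ is a totally geodesic unit round sphere and each $\Omega_i$ is a compact three-manifold with $\text{Ric}\geq 2$, the classical Hang--Wang rigidity theorem \cite[Theorem~2]{hangwang} identifies each $\Omega_i$ with the hemisphere $\mathbb{S}^3_+$; gluing the two hemispheres along $\Sigma$ shows that $M^3$ is isometric to $\mathbb{S}^3$.

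The genuinely nontrivial content lives entirely in Theorem~\ref{teo2} and Proposition~\ref{prop1sun}, which are already established, so there is no real obstacle in the rigidity of $\Sigma$ itself. The only point that deserves attention---and the reason the minimal case upgrades to the \emph{two-sided} conclusion $M\cong\mathbb{S}^3$ rather than the one-sided statement of Theorem~\ref{teo2}---is that $\mathrm{H}=0$ makes $\Sigma$ totally geodesic, so that \emph{both} components $\Omega_1,\Omega_2$ meet the hypotheses of \cite[Theorem~2]{hangwang} simultaneously, in contrast to the strictly mean-convex side singled out in Theorem~\ref{teo1}. Verifying this two-sided admissibility is the main (if minor) obstacle; once it is in place the isometry $M^3\cong\mathbb{S}^3$ follows immediately.
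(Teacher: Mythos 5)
Your proposal is correct and follows essentially the same route the paper intends: the paper leaves this final corollary without an explicit proof, but its proof of the analogous minimal-case corollary after Theorem \ref{teo1} is exactly your argument with Theorem \ref{teo1} replaced by Theorem \ref{teo2} (rigidity of $\Sigma$ from the even-symmetry theorem, $|\Sigma|=4\pi$ and total geodesy from $\mathrm{H}=0$, separation via \cite{Lawson70}, and \cite[Theorem 2]{hangwang} applied to both components). Your remark that total geodesy is what makes both sides admissible for Hang--Wang, upgrading the one-sided conclusion to $M^3\cong\mathbb{S}^3$, is precisely the point the paper relies on.
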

\bibliographystyle{amsplain}
\bibliography{bibliography}

\end{document}